\documentclass{amsart}

\usepackage{amssymb}
\usepackage{graphicx}
\usepackage{lineno}

\newtheorem{theorem}{Theorem}

\newtheorem{lemma}{Lemma}

\newcommand{\A}{{\mathcal A}}
\newcommand{\U}{{\mathcal U}}

\newcommand{\C}{{\mathbb C}}

\newcommand{\D}{{\mathbb D}}

\def\be{\begin{equation}}
\def\ee{\end{equation}}
\newcommand{\bthm}{\begin{theorem}}
\newcommand{\ethm}{\end{theorem}}
\newcommand{\beqq}{\begin{eqnarray*}}
\newcommand{\eeqq}{\end{eqnarray*}}


\begin{document}

\title[Sharp bounds of Hankel determinants for inverse functions]{Sharp bounds of Hankel determinants of second and third order for inverse functions of certain class of univalent functions}

\author[M. Obradovi\'{c}]{Milutin Obradovi\'{c}}
\address{Department of Mathematics,
Faculty of Civil Engineering, University of Belgrade,
Bulevar Kralja Aleksandra 73, 11000, Belgrade, Serbia}
\email{obrad@grf.bg.ac.rs}

\author[N. Tuneski]{Nikola Tuneski}
\address{Department of Mathematics and Informatics, Faculty of Mechanical Engineering, Ss. Cyril and Methodius
University in Skopje, Karpo\v{s} II b.b., 1000 Skopje, Republic of North Macedonia.}
\email{nikola.tuneski@mf.ukim.edu.mk}

\subjclass{30C45, 30C55}

\keywords{Hankel determinant, second order, third order,  inverse functions, sharp bound, class $\U$}

\begin{abstract}
Let $\A$ be the class of functions that are analytic in the unit disc $\D$, normalized such that $f(z)=z+\sum_{n=2}^\infty a_nz^n$, and let class $\U(\lambda)$, $0<\lambda\le1$, consists of functions $f\in\A$, such that
\[ \left |\left (\frac{z}{f(z)} \right )^{2}f'(z)-1\right | < \lambda\quad (z\in\D). \]
In this paper we determine the sharp upper bounds for the Hankel determinants of second and third order for the inverse functions of functions from the class $\U(\lambda)$.
\end{abstract}

\maketitle

Let ${\mathcal A}$ be the class containing functions that are analytic in the unit disk $\D := \{ z\in \C:\, |z| < 1 \}$ and  are normalized such that
$f(0)=0= f'(0)-1$, i.e.,
\begin{equation}\label{eq-1}
f(z)=z+a_2z^2+a_3z^3+\cdots.
\end{equation}

\medskip

Further, let class  $\U(\lambda)$, $0<\lambda\le1$, consists  of all functions  $f\in {\mathcal A}$ such that
$$\left |\left (\frac{z}{f(z)} \right )^{2}f'(z)-1\right | < \lambda \quad (z\in \D).$$
The functions from the class $\U\equiv \U(1)$ are univalent (analytic and one-on-one), but are not starlike (doesn't map the unit disk onto a starlike region). The last is rare since the class of univalent starlike functions os very wide, and is main reason that class $\U(\lambda)$ attracts significant attention in past decades.

\medskip

More on this class can be found in \cite{OP-01,OP_2011,DTV-book}.

\medskip

Studying the behaviour of the coefficients of normalized univalent functions is one of the main directions for research in the theory of univalent functions. A problem that recently rediscovered, is to find upper bound (preferably sharp) of the modulus of the Hankel determinant $H_{q}(n)(f)$ of a given function $f$, for $q\geq 1$ and $n\geq 1$, defined by
\[
        H_{q}(n)(f) = \left |
        \begin{array}{cccc}
        a_{n} & a_{n+1}& \ldots& a_{n+q-1}\\
        a_{n+1}&a_{n+2}& \ldots& a_{n+q}\\
        \vdots&\vdots&~&\vdots \\
        a_{n+q-1}& a_{n+q}&\ldots&a_{n+2q-2}\\
        \end{array}
        \right |.
\]
The general Hankel determinant is hard to deal with, so the second and the third ones,
\[H_{2}(2)(f)= \left|\begin{array}{cc}
        a_2& a_3\\
        a_3& a_4
        \end{array}
        \right | = a_2a_4-a_{3}^2\] and
\[ H_3(1)(f) =  \left |
        \begin{array}{ccc}
        1 & a_2& a_3\\
        a_2 & a_3& a_4\\
        a_3 & a_4& a_5\\
        \end{array}
        \right | = a_3(a_2a_4-a_{3}^2)-a_4(a_4-a_2a_3)+a_5(a_3-a_2^2),
\]
respectively, are studied instead. The research is focused on the subclasses of univalent functions (starlike, convex, $\alpha$-convex, close-to-convex, spirallike,...) since the general class of normalised univalent functions is also hard to deal with. Some of the more significant results can be found in \cite{ckkls1,ckkls2,hayman-68,jhd1,jhd2,Kowalczyk-18,lss,kls,lrs,MONT-2019-2,zaprawa}.

\medskip

In this paper we will give the sharp upper bound of the modulus of the second and third Hankel determinant for the inverse functions for the functions in the class $\U(\lambda)$. Since functions $f$ from $\U(\lambda)$ are univalent, they have their inverse at least on the disk with radius 1/4 (due to the famous Koebe's 1/4 theorem). If the inverse has an expansion
\begin{equation}\label{eq-2}
f^{-1}(w) = w+A_2w^2+A_3w^3+\cdots,
\end{equation}
then, by using the identity $f(f^{-1}(w))=w$, from \eqref{eq-1} and \eqref{eq-2} we receive
\begin{equation}\label{eq 3}
\begin{array}{l}
A_{2}=-a_{2}, \\
A _{3}=-a_{3}+2a_{2}^{2} , \\
A_{4}= -a_{4}+5a_{2}a_{3}-5a_{2}^{3},\\
A_{5}= -a_{5}+6a_{2}a_{4}-21a_{2}^{2}a_3+3a_3^2+14a_2^4.
\end{array}
\end{equation}

\medskip

For our consideration we need the next lemma proven in \cite{MONT-2019-2}.

\medskip


 \begin{lemma}\label{lem1}
 For each function $f$ in $\mathcal{U}(\lambda),$ $0< \lambda \leq 1,$ there exists function $\omega _1,$ analytic in $\mathbb{D},$ such that $|\omega _1(z)|\leq |z| < 1,$ and $|\omega '_1(z)|\leq 1,$
 for all $z \in \mathbb{D},$ with
 \begin{equation}\label{eq1.1}
 \frac{z}{f(z)} =1-a_2z-\lambda z\omega _1(z).
\end{equation}
 Additionally, for
 \begin{equation}\label{eq-5}
  \omega _1(z)=c_1z+c_2z^2+\cdots,
   \end{equation}
we have
 \begin{equation}\label{eq-6}
 |c_1|\leq 1,\quad |c_2|\leq \frac{1}{2}(1-|c_1|^2)\quad   and \quad   |c_3|\leq \frac{1}{3}\left[1-|c_1|^2-\frac{4|c_2|^2}{1+|c_1|}\right].
 \end{equation}
\end{lemma}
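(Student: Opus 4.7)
The plan is to realize $\omega_1$ as the canonical object that converts the pointwise bound defining $\U(\lambda)$ into a Schwarz-type estimate for a derivative, and then read off the coefficient inequalities from classical bounds for bounded analytic functions. First, I would take \eqref{eq1.1} as the \emph{definition} of $\omega_1$: since \eqref{eq-1} yields $z/f(z) = 1 - a_2 z + (a_2^2 - a_3)z^2 + \cdots$, the quantity $1 - a_2 z - z/f(z)$ vanishes to order at least two at the origin, so
\begin{equation*}
\omega_1(z) := \frac{1 - a_2 z - z/f(z)}{\lambda z}
\end{equation*}
is analytic on $\D$ (using that $f$ has no zero on $\D \setminus \{0\}$ by univalence) and satisfies $\omega_1(0) = 0$.

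The heart of the argument is an algebraic identity. The quotient rule gives $z(z/f(z))' = z/f(z) - (z/f(z))^2 f'(z)$. Using \eqref{eq1.1} and its derivative to substitute for $z/f(z)$ and $(z/f(z))'$ on the right-hand side, all terms linear in $\omega_1$ cancel and one is left with
\begin{equation*}
\left(\frac{z}{f(z)}\right)^2 f'(z) - 1 = \lambda z^2 \omega_1'(z).
\end{equation*}
Since $f \in \U(\lambda)$, the left-hand side has modulus strictly less than $\lambda$ on $\D$, so $|z^2 \omega_1'(z)| < 1$. For any $z_0 \in \D$ and any $r$ with $|z_0| < r < 1$, the maximum modulus principle applied to the analytic function $\omega_1'$ on the closed disk $|z| \leq r$ gives $|\omega_1'(z_0)| < 1/r^2$; letting $r \to 1$ yields $|\omega_1'(z)| \leq 1$ throughout $\D$. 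Integrating this along the radial segment from $0$ to $z$ and using $\omega_1(0) = 0$ then delivers the companion bound $|\omega_1(z)| \leq |z|$.

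Finally, with $\omega_1'(z) = c_1 + 2 c_2 z + 3 c_3 z^2 + \cdots$ a bounded analytic self-map of $\D$, the three inequalities in \eqref{eq-6} are precisely the classical Schur--Carath\'eodory coefficient estimates for a function with $|\cdot| \leq 1$ on $\D$, under the identifications $\alpha_0 = c_1$, $\alpha_1 = 2c_2$, $\alpha_2 = 3c_3$ in the standard form $|\alpha_0| \leq 1$, $|\alpha_1| \leq 1 - |\alpha_0|^2$, $|\alpha_2| \leq 1 - |\alpha_0|^2 - |\alpha_1|^2/(1+|\alpha_0|)$.

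The main obstacle in this plan is spotting the algebraic identity in the second step; it is what converts the quantitative bound on the nonlinear quantity $(z/f(z))^2 f'(z) - 1$ into a clean Schwarz-type statement for the derivative $\omega_1'$. Once that identity is in place, the remainder of the argument is a routine appeal to the maximum modulus principle and to standard coefficient estimates for bounded analytic functions.
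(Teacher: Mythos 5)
Your proposal is correct, and since the paper itself only quotes Lemma~\ref{lem1} from \cite{MONT-2019-2} without reproducing a proof, it should be compared with that reference: your argument is essentially the standard one used there, namely defining $\omega_1$ through \eqref{eq1.1}, observing the identity $\left(\frac{z}{f(z)}\right)^2 f'(z)-1=\lambda z^2\omega_1'(z)$ so that $|\omega_1'|\le 1$ (you via maximum modulus on $z^2\omega_1'$, the reference equivalently via Schwarz's lemma applied to the second-order zero of $(z/f)^2f'-1$), and then reading \eqref{eq-6} off the classical coefficient bounds for the bounded function $\omega_1'$ with $\alpha_0=c_1$, $\alpha_1=2c_2$, $\alpha_2=3c_3$. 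No gaps; the verification that $1-a_2z-z/f(z)$ vanishes to second order, the cancellation in the key identity, and the appeal to the Schur--Carath\'eodory (Carlson-type) estimates are all sound.
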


\medskip

Using \eqref{eq1.1} and \eqref{eq-5} we have
  \begin{equation*}
z =[1-a_2z-\lambda z\omega _1(z)]f(z),
\end{equation*}
and after equating the coefficients,
\begin{align}\label{eq-7}
&a_3=\lambda c_1+a^2_2,\nonumber\\
&a_4=\lambda c_2+2\lambda a_2c_1+a_2^3,\\
\nonumber&a_5=\lambda c_3+2\lambda a_2c_2+\lambda ^2c_1^2+3\lambda a_2^2c_1+a_2^4,
\end{align}
that we will use later on.

\medskip

From \eqref{eq 3} and \eqref{eq-7}, after some calculations, we derive
\begin{equation}\label{eq-8}
\begin{array}{l}
A_{2}=-a_{2}, \\
A _{3}=-\lambda c_1+ a_{2}^{2} , \\
A_{4}= -\lambda c_2+3\lambda a_2c_1-a_2^3,\\
A_{5}= -\lambda c_3 + 4\lambda a_2c_2 -6\lambda a_2^2c_1 +2\lambda^2c_1^2+a_2^4.
\end{array}
\end{equation}

\medskip

We also need the next results from \cite{OPW_2016}.

\begin{lemma}\label{lem2}
Let $f\in\U(\lambda)$ for  $0<\lambda\le1$, and be given by $f(z)=z+\sum_{n=2}^{\infty}a_{n}z^n$. Then 
 \begin{equation}\label{eq-9}
  |a_2|\leq 1+\lambda.
 \end{equation}
 If $|a_2|=1+\lambda$, then $f$ must be of the form
\begin{equation}\label{ch-9-U-eq-003}
f(z) = \frac{z}{1-(1+\lambda)e^{i\phi}z+\lambda e^{2i\phi}z^2}
\end{equation}
for some $\phi\in[0,2\pi]$.
\end{lemma}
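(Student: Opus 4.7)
The plan is to combine the representation in Lemma~\ref{lem1},
\[F(z) := \frac{z}{f(z)} = 1 - a_2 z - \lambda z \omega_1(z),\]
(with $|\omega_1(z)|\le|z|$) with the non-vanishing of $F$ on $\D$ (a consequence of $f\in\U(\lambda)$ being univalent with $f(0)=0$, $f'(0)=1$). By the Schwarz lemma applied to $\omega_1$, we may write $\omega_1(z)=z\psi(z)$ for some analytic $\psi:\D\to\overline{\D}$, whence $F(z)=1-z[a_2+\lambda z\psi(z)]$, and any zero of $F$ in $\D$ corresponds to a fixed point of the analytic map
\[T(z) := \frac{1}{a_2+\lambda z\psi(z)}.\]

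For the bound $|a_2|\le 1+\lambda$, suppose toward contradiction that $|a_2|>1+\lambda$. For $z\in\overline{\D}$ the triangle inequality gives $|a_2+\lambda z\psi(z)|\ge|a_2|-\lambda>1$, so $T$ is analytic on $\overline{\D}$ with $|T(z)|\le 1/(|a_2|-\lambda)<1$; that is, $T$ maps $\overline{\D}$ strictly into $\D$. Brouwer's fixed-point theorem produces $z_0\in\D$ with $T(z_0)=z_0$, equivalently $F(z_0)=0$, contradicting the non-vanishing of $F$.

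For the equality case, rotate $f(z)\mapsto e^{-i\phi}f(e^{i\phi}z)$ to normalize $a_2=1+\lambda$; the goal is to show $\omega_1(z)=-z$. The map $T$ now satisfies $|T(z)|<1$ strictly on $\D$ with no interior fixed point, so by the Denjoy--Wolff theorem it has a unique boundary Denjoy--Wolff point $\zeta\in\partial\D$ with $T(\zeta)=\zeta$ in the angular limit. Since the image of $h(z):=(1+\lambda)+\lambda z\psi(z)$ lies in the closed disk $\{w:|w-(1+\lambda)|\le\lambda\}$---tangent to $\partial\D$ only at $w=1$---the requirement $|h(\zeta)|=1$ forces $h(\zeta)=1$, hence $\zeta=1$ and $\psi(1)=-1$ (angular limit). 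A rigidity argument---via the Julia--Carath\'{e}odory bound on the angular derivative of $T$ at $\zeta=1$, coupled with the sharp coefficient bounds of Lemma~\ref{lem1} (which collapse $c_n$ to zero for $n\ge 2$ whenever $|c_1|=1$)---then forces $|c_1|=1$, yielding $\omega_1(z)\equiv c_1 z$ by the Schwarz lemma; matching $\psi(1)=-1$ gives $c_1=-1$, so $F(z)=(1-z)(1-\lambda z)$ and $f$ is the claimed extremal.

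The main obstacle is the rigidity step in the equality case: the Brouwer argument gives a clean strict bound, but at $|a_2|=1+\lambda$ the fixed point of $T$ escapes to the boundary, and propagating the single boundary datum $\psi(1)=-1$ to the global identity $\omega_1\equiv -z$ requires combining the Denjoy--Wolff localization with the Julia--Carath\'{e}odory angular-derivative bound and the tight coefficient estimates of Lemma~\ref{lem1}.
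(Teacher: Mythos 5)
The paper does not actually prove this lemma; it is quoted from \cite{OPW_2016}, so your argument must stand on its own. For the inequality $|a_2|\le 1+\lambda$ your fixed-point argument is essentially sound, modulo a small inaccuracy: $\psi(z)=\omega_1(z)/z$ is only defined on $\D$, so you cannot speak of $T$ on $\overline{\D}$; but since $|T(z)|\le 1/(|a_2|-\lambda)=:\rho<1$ for all $z\in\D$, you can apply Brouwer on the closed disk $\{|z|\le\rho\}$, which $T$ maps into itself, and reach the same contradiction with the non-vanishing of $z/f(z)$. (The classical route is simply Rouch\'e's theorem applied to $1-a_2z$ and $-\lambda z\omega_1(z)$ on $|z|=r$ with $r$ close to $1$, which is more elementary but equivalent in effect.)

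The genuine gap is in the equality case. Your Denjoy--Wolff localization is correct: the boundary fixed point must be $\zeta=1$ and the angular limit of $\omega_1$ at $1$ equals $-1$. But this boundary datum alone does not force $\omega_1(z)=-z$ (for instance $\omega_1(z)=-z(z+a)/(1+az)$, $0<a<1$, satisfies $|\omega_1(z)|\le|z|$, $\omega_1(1)=-1$, yet $|c_1|=a<1$), so the decisive step is precisely the claim $|c_1|=1$, which you only assert. The tools you name do not deliver it as stated: the Julia--Carath\'eodory inequality at the Denjoy--Wolff point gives $0<T'(1)\le 1$, and since $T'=-\lambda\omega_1'/h^2$ with $h(1)=1$ this is the \emph{upper} bound $|\omega_1'(1)|\le 1/\lambda$, which adds nothing to Lemma~\ref{lem1}; and the bounds \eqref{eq-6} only become useful \emph{after} $|c_1|=1$ is known. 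What closes the argument is a \emph{lower} bound on the angular derivative of $\omega_1$ itself: since $\omega_1(0)=0$ and the angular limit at $1$ has modulus one (and the angular derivative exists because $T'(1)$ does), the Unkelbach--Osserman refinement of the boundary Schwarz lemma gives $|\omega_1'(1)|\ge 2/(1+|c_1|)$, while Lemma~\ref{lem1} gives $|\omega_1'(z)|\le 1$ throughout $\D$; together these force $|c_1|=1$, whence the Schwarz lemma yields $\omega_1(z)=c_1z=-z$, $z/f(z)=(1-z)(1-\lambda z)$, and undoing the rotation gives \eqref{ch-9-U-eq-003}. Since you explicitly defer this step (``the main obstacle'') rather than supply the required inequality, the equality-case proof is incomplete as written.
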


\medskip

In the same paper (\cite{OPW_2016}) it was conjectured that for functions in $\U(\lambda)$, $|a_n|\le\sum_{i=0}^{n-1}\lambda^i$ holds sharply, and was claimed to be proven in the case $n=3$:
\begin{equation}\label{eq-9-2}
|a_3|\leq 1+\lambda+\lambda^2.
\end{equation}
The proof rely on another claim, that for all functions $f$ from $\U(\lambda)$,
\begin{equation}\label{subord}
 \frac{f(z)}{z}\prec \frac{1}{(1+z)(1+\lambda z)}.
 \end{equation}
Recently, in \cite{lipon}, the second claim, and consequently the first one also, was proven to be wrong by giving a counterexample. Still, the subset of $\U(\lambda)$ when the inequality \eqref{eq-9-2} and subordination \eqref{subord} hold is nonempty, as the function 
\[  f_\lambda (z)= \frac{z}{(1-z)(1-\lambda z)}=\sum_{n=1}^{\infty}\frac{1-\lambda^n}{ 1-\lambda }z^n=z+(1+\lambda)z^2+(1+\lambda+\lambda^2)z^2+\cdots\]
shows. Here  $\left.\frac{1-\lambda^n}{ 1-\lambda }\right|_{\lambda =1}=n$ for all $n=1,2,3,\ldots$.

\medskip

Now we will give the sharp upper bound of the modulus of the second and the third Hankel determinant for the inverse functions of the functions from the class $\U(\lambda)$.

\begin{theorem}
Let $f\in\U(\lambda)$, $0<\lambda\le1$, and let its inverse is $f^{-1}$. Then
\begin{itemize} 
  \item[($i$)] $|H_2(2)(f^{-1})| \le \lambda(1+\lambda+\lambda^2)$ if the third coefficient of $f$ satisfies inequality \eqref{eq-9-2};
  \item[($ii$)] $|H_3(1)(f^{-1})| \le  \left\{
  \begin{array}{cc}
                   \frac{\lambda^2}{4}, & 0<\lambda\le \frac14, \\
                   \lambda^3, & \frac14\le\lambda\le1.
  \end{array}\right.$
\end{itemize}
Both results are sharp.
\end{theorem}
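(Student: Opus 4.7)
The plan is to convert each Hankel determinant into a polynomial expression in $a_2$ and the coefficients $c_1,c_2,c_3$ of the auxiliary function $\omega_1$ supplied by Lemma \ref{lem1}, using the formulas \eqref{eq-8} for the inverse coefficients, and then bound the resulting polynomial by the triangle inequality together with \eqref{eq-9}, \eqref{eq-6}, and (for part $(i)$ only) \eqref{eq-9-2}. In each case the problem then reduces to a single-variable optimization in $x=|c_1|\in[0,1]$.

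For part $(i)$ I would expand $A_2A_4-A_3^2$ from \eqref{eq-8} to get $\lambda a_2c_2-\lambda a_2^2c_1-\lambda^2c_1^2$. The key algebraic step is to invoke the relation $\lambda c_1=a_3-a_2^2$ from \eqref{eq-7}, which collapses the last two terms to $-\lambda c_1 a_3$ and yields the clean identity
\[
H_2(2)(f^{-1})=\lambda(a_2c_2-a_3c_1).
\]
The triangle inequality with $|a_2|\le 1+\lambda$, $|a_3|\le 1+\lambda+\lambda^2$, $|c_1|\le 1$, and $|c_2|\le\tfrac12(1-|c_1|^2)$ then produces a polynomial in $x=|c_1|$ that is readily checked to be increasing on $[0,1]$ (its derivative vanishes at $x=(1+\lambda+\lambda^2)/(1+\lambda)>1$), so the maximum sits at $x=1$ and equals $\lambda(1+\lambda+\lambda^2)$.

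For part $(ii)$ I would compute $A_3(A_2A_4-A_3^2)-A_4(A_4-A_2A_3)+A_5(A_3-A_2^2)$ using \eqref{eq-8}. The decisive observation, and the main obstacle of the proof, is that after a careful but elementary collection of terms every monomial involving $a_2$ cancels, leaving the remarkably simple expression
\[
H_3(1)(f^{-1})=\lambda^2(c_1c_3-c_2^2)-\lambda^3c_1^3,
\]
which depends only on $c_1,c_2,c_3$; this is why part $(ii)$ requires no additional hypothesis on $|a_3|$. After the triangle inequality and the estimates \eqref{eq-6}, together with the substitution $|c_2|^2\le\tfrac14(1-x^2)^2$, the problem reduces to maximizing
\[
G(x)=\frac{(1-x^2)(3+x^2)}{12}+\lambda x^3
\]
over $x\in[0,1]$. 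A direct computation gives $G'(x)=\tfrac{x}{3}(-x^2+9\lambda x-1)$, whose nonzero roots multiply to $1$, so $G$ is either monotonically decreasing or first decreasing then increasing on $[0,1]$; its maximum is therefore attained at an endpoint, and comparing $G(0)=1/4$ with $G(1)=\lambda$ gives the piecewise bound claimed in the statement.

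Finally, sharpness will be confirmed by explicit extremals. The function $f_\lambda(z)=z/[(1-z)(1-\lambda z)]$, for which $a_2=1+\lambda$ and $\omega_1(z)=-z$, yields $c_1=-1$ and $c_j=0$ for $j\ge 2$, and realizes equality both in $(i)$ and in the second branch of $(ii)$. For the first branch of $(ii)$ the extremal is the function $f(z)=z/(1-\tfrac{\lambda}{2}z^3)$, obtained from $a_2=0$ and $\omega_1(z)=\tfrac{1}{2}z^2$, which gives $c_1=c_3=0$ and $|c_2|=1/2$, so that $|H_3(1)(f^{-1})|=\lambda^2/4$.
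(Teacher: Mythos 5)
Your proposal is correct and follows essentially the same route as the paper: the same representation from Lemma \ref{lem1}, the same key identities $H_2(2)(f^{-1})=\lambda(a_2c_2-a_3c_1)$ and $H_3(1)(f^{-1})=\lambda^2(c_1c_3-c_2^2)-\lambda^3c_1^3$, and the same reduction to a one-variable maximization (your $G$ is exactly the paper's $\phi_2/12$). The only cosmetic differences are that you treat the endpoint comparison for $G$ uniformly instead of splitting at $\lambda=\tfrac29$, and for the $\lambda^3$ branch of $(ii)$ you exhibit sharpness with $f_\lambda$ rather than the paper's $z/(1-\lambda z^2)$; both choices are valid.
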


\begin{proof}
($i$) Using the definition of the second Hankel determinant, together with \eqref{eq-2} and \eqref{eq-8} we have
\[ H_2(2)(f^{-1}) = A_2A_4-A_3^2 = \lambda(a_2c_2-a_2^2c_1-\lambda c_1^2), \]
which gives
\[  |H_2(2)(f^{-1})| \le \lambda\left(|a_2||c_2|+ |c_1||\lambda c_1 +a_2^2|\right).\]
From \eqref{eq-9-2} we have $|\lambda c_1+a_2^2| = |a_3|\le 1+\lambda+\lambda^2$, from \eqref{eq-6}, $|c_2|\le \frac12(1-|c_1|^2)$, and so,
\begin{equation}\label{eq-11}
\begin{split}
  |H_2(2)(f^{-1})|
  &\le \lambda\left[\frac{|a_2|}{2}(1-|c_1|^2) + (1+\lambda +\lambda^2)|c_1|\right]\\
  &= \lambda\left[-\frac{|a_2|}{2}|c_1|^2  + (1+\lambda +\lambda^2)|c_1| + \frac{|a_2|}{2}\right] \\
  &:= \lambda \phi_1(|c_1|),
\end{split}
\end{equation}
where $\phi_1(t) = \lambda\left[-\frac{|a_2|}{2}t^2  + (1+\lambda +\lambda^2)t + \frac{|a_2|}{2}\right] $, $0\le t\le1$. Since for $f\in\U(\lambda)$ we have $|a_2|\le1+\lambda$ (see \eqref{eq-9}), then we conclude that $\phi_1$ is an increasing function on [0,1], i.e., it attains its maximum for $t=|c_1|=1$, which is equal to $1+\lambda+\lambda^2$. Using this and \eqref{eq-11} we have the statement of part (i) of this theorem. The result is sharp. Namely, for the function $f_{\lambda}(z) = \frac{z}{(1-z)(1-\lambda z)} = z+(1+\lambda)z^2+(1+\lambda+\lambda^2)z^3+ (1+\lambda+\lambda^2+\lambda^3)z^4+\cdots$, but using \eqref{eq 3} we have that
\[ f^{-1}_{\lambda}(w) = w- (1+\lambda)w^2 + (1+3\lambda+\lambda^2)w^3 - (1+6\lambda+6\lambda^2+\lambda^3)w^4+\cdots \]
and
\[H_2(2)(f^{-1}) = (1+\lambda)(1+6\lambda+6\lambda^2+\lambda^3)-(1+3\lambda+\lambda^2)^2 = \lambda(1+\lambda+\lambda^2).\]

\medskip

($ii$) Using \eqref{eq-2} and the definition of the third Hankel determinant, we have
\[ H_3(1)(f^{-1}) = A_3(A_2A_4-A_3^2) -A_4(A_4-A_2A_3)+A_5(A_3-A_2^2). \]
From this and \eqref{eq-8}, after some calculations we have
\[ \left| H_3(1)(f^{-1}) \right| = \left| \lambda^2c_1c_3 - \lambda^2c_2^2-\lambda^3c_1^3 \right| \le \lambda^2\left( |c_1||c_3| +|c_2|^2+\lambda|c_1|^3 \right), \]
and, by using the relations for $|c_2|$ and $|c_3|$ given in \eqref{eq-6}:
\begin{equation}\label{eq-12}
\begin{split}
  \left| H_3(1)(f^{-1}) \right|
  &\le \lambda^2\left[ |c_1|\frac13\left( 1-|c_1|^2-\frac{4|c_2|^2}{1+|c_1|}\right) +|c_2|^2 + \lambda|c_1|^3 \right]\\
  &= \frac{\lambda^2}{3} \cdot \left[|c_1|- |c_1|^3 +  \frac{3-|c_1|}{1+|c_1|}|c_2|^2 + 3\lambda|c_1|^3 \right]\\
  &\le \frac{\lambda^2}{3} \cdot \left[|c_1| +(3\lambda-1) |c_1|^3 +  \frac{3-|c_1|}{1+|c_1|}\frac14(1-|c_1|^2)^2 \right]\\
  &= \frac{\lambda^2}{12} \cdot \left[3-2|c_1|^2 + 12\lambda|c_1|^3 - |c_1|^4\right]\\
  &:= \frac{\lambda^2}{12} \cdot \phi_2(t),
  \end{split}
\end{equation}
where $\phi_2(t) = 3-2t^2 + 12\lambda t^3 - t^4,$ $0\le t=|c_1|\le1$.

\medskip

Since $\phi'_2(t) = -4t(t^2-9\lambda t+1)$, then if $0<\lambda\le\frac29$, we have $\phi'_2(t)\le0$ for $0\le t \le1$, and $\phi_2(t)\le\phi_2(0)=3$, which by \eqref{eq-12} implies $\left| H_3(1)(f^{-1}) \right|\le\frac{\lambda^2}{4}.$

\medskip

If $\frac29\le\lambda\le1$, then
\[
\begin{split}
\max\{ \phi_2(t):0\le t\le1\} &= \max\{\phi_2(0),\phi_2(1)\} = \max\{3,12\lambda\} \\
&= \left\{\begin{array}{cc}
                                                                                        3, & \frac29\le\lambda\le\frac14, \\
                                                                                        12\lambda, & \frac14\le\lambda\le1.
                                                                                      \end{array}\right.
                                                                                      \end{split}
                                                                                      \]

\medskip

Combining all this facts we receive the estimate ($ii$) of the theorem.

\medskip

The estimate ($ii$) is also sharp due to the functions $f_1(z) = \frac{z}{1-\frac{\lambda}{2}z^3} = z+\frac{\lambda}{2}z^4+\frac{\lambda^2}{4}z^7+\cdots$ and $f_2(z) = \frac{z}{1-\lambda z^2} = z+\lambda z^3+\lambda^2z^5+\cdots$. Indeed, it is easy to check that both functions are in $\U(\lambda)$. Further, for $f_1$ we have $a_2=a_3=a_5=0$, $a_4=\frac{\lambda}{2}$, and by \eqref{eq 3}, $A_2=A_3=A_5=0$, $A_4=-\frac{\lambda}{2}$, i.e., $ H_3(1)(f_1^{-1}) = -\frac{\lambda^2}{4}$ and $\left| H_3(1)(f_1^{-1}) \right| = \frac{\lambda^2}{4}$. Similarly, for the function $f_2$, $a_2=0$, $a_3=\lambda$, $a_4=0$, $a_5=\lambda^2$, and $A_2=0$, $A_3=-\lambda$, $A_4=0$. $A_5=2\lambda^2$, i.e.,  $\left| H_3(1)(f_2^{-1}) \right| = |-\lambda^3| = \lambda^3$.

\medskip

\end{proof}

\medskip


\begin{thebibliography}{99}


%

\bibitem{ckkls1} Cho,  N.E., Kowalczyk, B., Kwon, O.S., Lecko, A., Sim, Y.J.: Some coefficient inequalities related to the Hankel determinant for strongly starlike functions of order alpha. J. Math. Inequal. 11 (2), 429--439 (2017)

\bibitem{ckkls2} Cho,  N.E., Kowalczyk, B., Kwon, O.S., Lecko, A., Sim, Y.J.: The bounds of some determinants for starlike
functions of order alpha. Bull. Malays. Math. Sci. Soc. 41, 523--535 (2018)


\bibitem{hayman-68}
Hayman W.K., On the second Hankel determinant of mean univalent functions, \textit{Proc. London Math. Soc.} (3), 18:77–94, 1968.

\bibitem{jhd1} A. Janteng, S.A. Halim, M. Darus, {\it Coefficient inequality for a function whose derivative has a positive real part}. J Inequal Pure Appl Math. {\bf 7} vol.2 (2006), 1--5.

\bibitem{jhd2}  A. Janteng, S.A. Halim, M. Darus, {\it Hankel determinant for starlike and convex functions}. Int J Math Anal. {\bf 1} vol.13 (2007), 619--625.

\bibitem{Kowalczyk-18}
Kowalczyk B.,  Lecko A., Sim Y.J., The sharp bound of the Hankel determinant of the third kind for convex functions. \textit{Bull. Aust. Math. Soc.},  97 (2018), no. 3, 435--445.

\bibitem{lss}
Lecko A.,  Sim Y.J., \'Smiarowska B., The sharp bound of the Hankel determinant of the third kind for starlike functions of order 1/2, \textit{Complex Analysis and Operator Theory} https://doi.org/10.1007/s11785-018-0819-0.

\bibitem{lipon}
Li L., Ponnusamy S., Wirths K.J., Relations of the class $\U(\lambda)$ to other families of functions, 	arXiv:2104.05346.

\bibitem{kls} Kwon, O.S.; Lecko, A.; Sim, Y.J. The bound of the Hankel determinant of the third kind for starlike functions. {\it Bull. Malays. Math. Sci. Soc.} {\bf 2019}, {\it 42}, 767--780.

\bibitem{lrs} S.K. Lee, V. Ravichandran, S. Supramaniam, {\it Bounds for the second Hankel determinant of certain univalent functions}. J Inequal Appl. (2013) art.281.

\bibitem{OB-nashr}
Obradovi\'c M., Starlikeness and certain class of rational functions, \textit{Math. Nachr.} 175 (1995), 263–268.


\bibitem{MONT-2019-2}
Obradovi\'{c} M., Tuneski N., Some properties of the class $\mathcal{U}$, \textit{Annales. Universitatis Mariae Curie-Sk{\l}odowska. Sectio A - Mathematica}, {\bf 73} 1 (2019), 49--56.




%
%

\bibitem {OP-01}
M. Obradovi\'c, S. Ponnusamy, New criteria and distortion theorems for univalent functions, Complex Variables Theory Appl., {\bf 44}(2001), 173--191.

\bibitem{OP_2011}
M. Obradovi\'c, S. Ponnusamy, On the class $\mathcal{U}$, Proc. 21st Annual Conference of the Jammu Math. Soc. and a National Seminar on Analysis and its Application, 11-26, 2011.

%
%

\bibitem{OPW_2016}
M. Obradovi\'c, S. Ponnusamy, K.J. Wirths, Geometric studies on the class $\mathcal{U}(\lambda)$. Bull. Malays. Math. Sci. Soc. 39 (2016), no. 3, 1259--1284.

\bibitem{OPW_2016}
M. Obradovi\'c, S. Ponnusamy, K.J. Wirths, Logarithmic coefficients and a coefficient conjecture for univalent functions. Monatsh. Math., pages 1–13, 2017.


\bibitem{DTV-book}
D.K. Thomas, N. Tuneski, A. Vasudevarao, Univalent Functions: A Primer, De Gruyter Studies in Mathematics, {\bf 69}, De Gruyter, Berlin, Boston, 2018.

\bibitem{zaprawa}
Zaprawa P., Third Hankel determinants for subclasses of univalent functions, \textit{Mediterr. J. Math.} 14 (2017), no. 1, Art. 19, 10 pp.


\end{thebibliography}
\end{document}